\numberwithin{equation}{section}
\newtheorem{theorem}{Theorem}[section]
\newtheorem{remark}{Remark}[section]
\begin{document}
\begin{frontmatter}
\title{ {\Large A novel directly energy-preserving method for charged particle dynamics}}
\author{Yexin Li$^{a,b}$, Ping Jiang$^{c}$, Haochen Li$^{a,b,*}$ }
\address{
a School of Science, Beijing University of Posts and Telecommunications,
Beijing 100876, China \\
b Key Laboratory of Mathematics and Information Networks(Beijing University of Posts and Telecommunications),
Ministry of Education, China \\
c Peking University Chongqing Research Institute of Big Data, Chongqing 400031, China
}
\begin{abstract}

In this paper, we apply the coordinate increment discrete gradient (CIDG) method to solve the Lorentz force system
which can be written as a non-canonical Hamiltonian system. Then we can obtain a new energy-preserving CIDG-I method for the system.
The CIDG-I method can combine with its adjoint method CIDG-II which is also a energy-preserving method to form a new  method, namely CIDG-C method.
The CIDG-C method is symmetrical and can conserve
the Hamiltonian energy directly and exactly.
With comparison to the well-used Boris method, numerical experiments indicate that
the CIDG-C method holds advantage over the Boris method in terms of energy-conserving.

\textcolor{red}{The Fig. 5(b) in the original paper~\cite{Ref2024Li} contains an error. We submit the correct Fig. \ref{fig:5}(b) and an errata in this paper, which is described in remark \ref{remark:3}.}
\end{abstract}
\begin{keyword}
Lorentz force system;
Hamiltonian system;
Energy-preserving;
Discrete gradient methods.

\end{keyword}
\end{frontmatter}

\begin{figure}[b]
\small \baselineskip=10pt
\rule[2mm]{1.8cm}{0.2mm} \par
$^{*}$Corresponding author.\\
E-mail address: lihaochen\_bjut@sina.com.
\end{figure}

\pagestyle{myheadings}
\markboth{\hfil 
   \hfil \hbox{}}
{\hbox{} \hfil 
Yexin Li, Ping Jiang, Haochen Li  \hfil}

\section{Introduction}
\label{intro}

With the continuous breakthrough the theory of scientific computing and the explosive growth of its application across diverse scientific discipline, the numerical solution of the Hamilton system has gradually become the focus of attention of mathematicians and physicists. For a system, a numerical method which can conserve one or more geometric properties is called geometric numerical integration method, or structure-preserving method~\cite{Ref1982Dodd,Ref1986Feng,Ref2010Feng,Ref2016Brugnano}, such as symplectic methods, volume-preserving methods, energy-preserving methods, and so forth.
Geometric numerical integration method has become an essential tool to solve the Hamiltonian systems. And the method has been effectively applied in many equations, including but not limited to H\'{e}non-Heiles system, nonlinear Schr\"{o}dinger equation, Maxwell's equation.~\cite{Ref2001Chen,Ref2006Hairer,Ref1993Hairer,Ref2010Kong,Ref2008Wang,Ref2014Xu}.

Because the collisions between the charged particles occur infrequently in hot plasmas, many important phenomenons in hot plasmas can be simplified and considered as a charged particle motion.
Therefore, the motion of a charged particle under the influence of electromagnetic fields is one of the most fundamental process in hot plasmas~\cite{Ref2008Bellan,Ref2013Qin}.
Long-term numerical calculation on the trajectories of a charged particle have been widely used to study its dynamical behaviors. However, the non-geometric method such as the standard 4th order Runge-Kutta method is not accurate in the long term simulation.
For a charged particle in the electromagnetic field, its dynamics is governed by Lorentz force system, which can be written as a non-canonical Hamiltonian system~\cite{Ref1981Littlejohn,Ref1980Morrison}.
Therefore, the solution of the Lorentz force system can be derived by applying the geometric numerical integration method on its non-canonical Hamiltonian system.

A considerable amount of literature on geometric numerical integration method has been published, including the volume-preserving algorithm~\cite{Ref2015He,Ref2015Zhang}, the Boris method which can conserve phase space volume~\cite{Ref2013Qin,Ref2005Birdsall,Ref1970Boris,Ref2002Stoltz}, the variational symplectic method~\cite{Ref1983Littlejohn,Ref2008Qin, Ref2009Qin}.
The Boris method, which is the $de$ $facto$ standard for advancing a charged
particle in an electromagnetic field and conserves phase space volume exactly.
Albeit the Boris algorithm has proven itself highly effective for charged particle dynamics in an electromagnetic field, but it does not conserve energy and a numerical drift may be observed inevitably~\cite{Ref2018Hairer}.
It is well-known that the most essential feature of a Hamiltonian system is the
Hamiltonian energy. So We hope to construct an energy-preserving or approximate energy-preserving algorithm to solve the Lorentz force system numerically, and obtain long-term correct results without numerical drift.

Based on numerical quadrature formula, the Bool discrete line integral method ~\cite{Ref2016Li} and the high-order energy-conserving line integral methods~\cite{Ref2019Brugnano} can
conserve the energy exactly for the polynomial Hamiltonian, and conserve the energy of the non-polynomial Hamiltonian up to the round-off error. The invariant energy quadratization method~\cite{Ref2019Li} can conserve the modified energy, which is the approximation of the original Hamiltonian energy.
However, there are few numerical methods that can directly and exactly conserve the original Hamiltonian energy. If we want to construct an accurate energy-preserving algorithm for the Lorentz force system, we need to focus on the algorithms that preserve the energy of the Hamiltonian system.

Over the past few decades, a number of methods have been proposed to preserve the energy of the Hamiltonian system.
In 1996, one of the most famous energy-preserving methods which can construct integral preserving methods for ordinary differential equations was developed by O. Gonzalez, and the method is known as the discrete gradient method~\cite{Ref1996Gonzalez}.
Subsequently, the discrete variational method
for nonlinear wave equation was proposed by Matsuo~\cite{Ref2003Matsuo}. E.Faou proposed that there is an energy-preserving B-series method in the Hamiltonian system. Moreover,
Quispel and McLaren derived the averaged vector field method~\cite{Ref2008Quispel}. For the Lorentz force system in the form of the Hamiltonian system, this study presents a novel directly energy-preserving method.

Fistly, the Lorentz force system can be write as a non-canonical Hamiltonian form. Then we apply the coordinate increment discrete gradient method~\cite{Ref1988Itoh} to
the Hamiltonian system to get a new energy-preserving method. The new method and its adjoint method are both energy-preserving methods.
From this two methods we can derive a new coordinate increment discrete gradient composition (CIDG-C) method.
A key advantage of the CIDG-C method is that it is symmetrical, while simultaneously conserving the Hamiltonian energy exactly without numerical quadrature formula.

This paper is organized as follows. In Section.~\ref{sec:3}, We introduce the dynamics of charged particles in the electromagnetic field, which is written as a non-canonical Hamiltonian system. By using the coordinate increment discrete gradient method to solve the Hamiltonian
system in Section.~\ref{sec:4}, we obtain a new energy-preserving method and its adjoint method. Then the two methods are combined into a new method, namely CIDG-C method. Numerical experiments are presented in
Section.~\ref{sec:5} to compare
the theoretical results of the new CIDG-C method with the
 Boris method~\cite{Ref1970Boris}. Section.~\ref{sec:6} concludes this paper.

\section{Non-canonical Hamiltonian form of the system}
\label{sec:3}

In this section, we firstly review the Non-canonical Hamiltonian form of the Lorentz force system~\cite{Ref1981Littlejohn,Ref1980Morrison,Ref2015He}.
For a charged particle with position $\textbf{x}(t)=(x(t),y(t),z(t))^{T} \in \mathbb{R}^{3}$ in an electromagnetic field,
its dynamics is described by the following Newton-Lorentz equation
\begin{equation}\label{eq:3s1}
\frac{d^{2}\textbf{x}}{d t^{2}}=\frac{d\textbf{x}}{d t}\times \textbf{B}(\textbf{x})- \nabla U(\textbf{x}),
\end{equation}
where $- \nabla U(\textbf{x}):= \textbf{E}(\textbf{x})$ is an electric field, and $\textbf{B}(\textbf{x})=(B_{1}(\textbf{x}), B_{2}(\textbf{x}), B_{3}(\textbf{x}))^{T}$ is a magnetic field.

Let $\textbf{v} = (v_{1},v_{2},v_{3})^{T} = \frac{d\textbf{x}}{d t}  $ and
\[
\textbf{S}(\textbf{x})=
\left(
       \begin{array}{ccc}
              0   & B_{3}(\textbf{x}) & -B_{2}(\textbf{x}) \\[0.3cm]
              -B_{3}(\textbf{x})  & 0 & B_{1}(\textbf{x}) \\[0.3cm]
              B_{2}(\textbf{x}) & -B_{1}(\textbf{x}) & 0
             \end{array}
\right) = -\textbf{S}^{T}(\textbf{x}),
\]
in \eqref{eq:3s1}, then we can obtain the following ODEs
\begin{align}\label{eq:3s2}
&\frac{d\textbf{x}}{d t}=\textbf{v},\\ \label{eq:3s3}
&\frac{d\textbf{v}}{d t}=\textbf{S}(\textbf{x})\textbf{v} - \nabla U(\textbf{x}),\\ \label{eq:3s4}
& (\textbf{x}(0),\textbf{v}(0)) = (\textbf{x}_{0},\textbf{v}_{0}) \in \Omega \subseteq \mathbb{R}^{3}\times \mathbb{R}^{3}.
\end{align}
Denote $\textbf{z} =(\textbf{x}^{T}, \textbf{v}^{T})^{T} = (x,y,z,v_{1},v_{2},v_{3})^{T}$, the ODEs \eqref{eq:3s2}-\eqref{eq:3s4}
can be written as a non-canonical Hamiltonian system
\begin{equation}\label{eq:3s5}
\frac{d\textbf{z}}{d t}=K(\textbf{z})\nabla H(\textbf{z}),
\end{equation}
where
\[
K(\textbf{z})=
\left(
       \begin{array}{cc}
               0  & I  \\[0.3cm]
              -I  & \textbf{S}(\textbf{x})
             \end{array}
\right)
\]
is a skew-symmetrical matrix, and
$H(\textbf{z}) =\frac{1}{2}\textbf{v}^{T} \textbf{v} + U(\textbf{x})$
is the Hamiltonian energy of the system, which is conserved by
\begin{equation}\label{eq:3s6}
\frac{d}{dt}H(\textbf{z})= \nabla H^{T}(\textbf{z})\frac{d\textbf{z}}{d t} = \nabla H^{T}(\textbf{z}) K(\textbf{z})\nabla H(\textbf{z}) =0.
\end{equation}

\section{Coordinate increment discrete gradient methods}
\label{sec:4}

Notice that the Hamiltonian energy of the system \eqref{eq:3s5} can be exactly preserved by the coordinate increment discrete gradient method~\cite{Ref1988Itoh}
\begin{equation}\label{eq:4s1}
\frac{\textbf{z}^{n+1}-\textbf{z}^{n}}{h} = K(\frac{\textbf{z}^{n+1}+\textbf{z}^{n}}{2}) \bar{\nabla} H(\textbf{z}^{n+1},\textbf{z}^{n}),
\end{equation}
i.e.,
\begin{equation*}\label{eq:4s1-2}
\frac{\textbf{z}^{n+1}-\textbf{z}^{n}}{h} = K(\frac{\textbf{z}^{n+1}+\textbf{z}^{n}}{2})
\left(
       \begin{array}{c}
              \frac{H(z^{n+1}_{1},z^{n}_{2},z^{n}_{3},z^{n}_{4},z^{n}_{5},z^{n}_{6})-H(z^{n}_{1},z^{n}_{2},z^{n}_{3},z^{n}_{4},z^{n}_{5},z^{n}_{6})}{z^{n+1}_{1}-z^{n}_{1}}   \\
              \frac{H(z^{n+1}_{1},z^{n+1}_{2},z^{n}_{3},z^{n}_{4},z^{n}_{5},z^{n}_{6})-H(z^{n+1}_{1},z^{n}_{2},z^{n}_{3},z^{n}_{4},z^{n}_{5},z^{n}_{6})}{z^{n+1}_{2}-z^{n}_{2}}   \\
              \frac{H(z^{n+1}_{1},z^{n+1}_{2},z^{n+1}_{3},z^{n}_{4},z^{n}_{5},z^{n}_{6})-H(z^{n+1}_{1},z^{n+1}_{2},z^{n}_{3},z^{n}_{4},z^{n}_{5},z^{n}_{6})}{z^{n+1}_{3}-z^{n}_{3}}  \\
              \frac{H(z^{n+1}_{1},z^{n+1}_{2},z^{n+1}_{3},z^{n+1}_{4},z^{n}_{5},z^{n}_{6})-H(z^{n+1}_{1},z^{n+1}_{2},z^{n+1}_{3},z^{n}_{4},z^{n}_{5},z^{n}_{6})}{z^{n+1}_{4}-z^{n}_{4}}  \\
              \frac{H(z^{n+1}_{1},z^{n+1}_{2},z^{n+1}_{3},z^{n+1}_{4},z^{n+1}_{5},z^{n}_{6})-H(z^{n+1}_{1},z^{n+1}_{2},z^{n+1}_{3},z^{n+1}_{4},z^{n}_{5},z^{n}_{6})}{z^{n+1}_{5}-z^{n}_{5}}  \\
              \frac{H(z^{n+1}_{1},z^{n+1}_{2},z^{n+1}_{3},z^{n+1}_{4},z^{n+1}_{5},z^{n+1}_{6})-H(z^{n+1}_{1},z^{n+1}_{2},z^{n+1}_{3},z^{n+1}_{4},z^{n+1}_{5},z^{n}_{6})}{z^{n+1}_{6}-z^{n}_{6}}
             \end{array}
\right),
\end{equation*}
where $h$ is the step size, and we consider the coordinate increment discrete gradient as
\begin{equation}\label{eq:4s2}
\bar{\nabla} H(\bar{\textbf{z}},\textbf{z})=
\left(
       \begin{array}{c}
              \frac{H(\bar{z}_{1},z_{2},z_{3},z_{4},...,z_{m})-H(z_{1},z_{2},z_{3},z_{4},...,z_{m})}{\bar{z}_{1}-z_{1}}   \\
              \frac{H(\bar{z}_{1},\bar{z}_{2},z_{3},z_{4},...,z_{m})-H(\bar{z}_{1},z_{2},z_{3},z_{4},...,z_{m})}{\bar{z}_{2}-z_{2}}   \\
              \frac{H(\bar{z}_{1},\bar{z}_{2},\bar{z}_{3},z_{4},...,z_{m})-H(\bar{z}_{1},\bar{z}_{2},z_{3},z_{4},...,z_{m})}{\bar{z}_{3}-z_{3}}  \\
              \vdots\\
              \frac{H(\bar{z}_{1},\bar{z}_{2},...,\bar{z}_{n-1},\bar{z}_{m})-H(\bar{z}_{1},\bar{z}_{2},...,\bar{z}_{m-1},z_{m})}{\bar{z}_{m}-z_{m}}
             \end{array}
\right),
\end{equation}
with the notation $\bar{\textbf{z}} = \textbf{z}^{n+1}$, $\textbf{z} = \textbf{z}^{n}$ and $m=6$. We denote the method \eqref{eq:4s1} as coordinate increment discrete gradient - I method (CIDG-I).

Then we consider the adjoint method of CIDG-I \eqref{eq:4s1}, which we denote as the coordinate increment discrete gradient - II method (CIDG-II) of system \eqref{eq:3s5}
\begin{equation}\label{eq:4s3}
\frac{\textbf{z}^{n+1}-\textbf{z}^{n}}{h} = K(\frac{\textbf{z}^{n+1}+\textbf{z}^{n}}{2})\bar{\nabla} H(\textbf{z}^{n},\textbf{z}^{n+1}),
\end{equation}
i.e.,
\begin{equation*}\label{eq:4s3-2}
\frac{\textbf{z}^{n+1}-\textbf{z}^{n}}{h} = K(\frac{\textbf{z}^{n+1}+\textbf{z}^{n}}{2})
\left(
       \begin{array}{c}
              \frac{H(z^{n+1}_{1},z^{n+1}_{2},z^{n+1}_{3},z^{n+1}_{4},z^{n+1}_{5},z^{n+1}_{6})-H(z^{n}_{1},z^{n+1}_{2},z^{n+1}_{3},z^{n+1}_{4},z^{n+1}_{5},z^{n+1}_{6})}{z^{n+1}_{1}-z^{n}_{1}}   \\
              \frac{H(z^{n}_{1},z^{n+1}_{2},z^{n+1}_{3},z^{n+1}_{4},z^{n+1}_{5},z^{n+1}_{6})-H(z^{n}_{1},z^{n}_{2},z^{n+1}_{3},z^{n+1}_{4},z^{n+1}_{5},z^{n+1}_{6})}{z^{n+1}_{2}-z^{n}_{2}}   \\
              \frac{H(z^{n}_{1},z^{n}_{2},z^{n+1}_{3},z^{n+1}_{4},z^{n+1}_{5},z^{n+1}_{6})-H(z^{n}_{1},z^{n}_{2},z^{n}_{3},z^{n+1}_{4},z^{n+1}_{5},z^{n+1}_{6})}{z^{n+1}_{3}-z^{n}_{3}}  \\
              \frac{H(z^{n}_{1},z^{n}_{2},z^{n}_{3},z^{n+1}_{4},z^{n+1}_{5},z^{n+1}_{6})-H(z^{n}_{1},z^{n}_{2},z^{n}_{3},z^{n}_{4},z^{n+1}_{5},z^{n+1}_{6})}{z^{n+1}_{4}-z^{n}_{4}}  \\
              \frac{H(z^{n}_{1},z^{n}_{2},z^{n}_{3},z^{n}_{4},z^{n+1}_{5},z^{n+1}_{6})-H(z^{n}_{1},z^{n}_{2},z^{n}_{3},z^{n}_{4},z^{n}_{5},z^{n+1}_{6})}{z^{n+1}_{5}-z^{n}_{5}}  \\
              \frac{H(z^{n}_{1},z^{n}_{2},z^{n}_{3},z^{n}_{4},z^{n}_{5},z^{n+1}_{6})-H(z^{n}_{1},z^{n}_{2},z^{n}_{3},z^{n}_{4},z^{n}_{5},z^{n}_{6})}{z^{n+1}_{6}-z^{n}_{6}}
             \end{array}
\right).
\end{equation*}

Finally the CIDG-I method $\Phi_{h}$ \eqref{eq:4s1} and its adjoint method  CIDG-II $\Phi^{*}_{h}$ \eqref{eq:4s3} can be combined~\cite{Ref2006Hairer} into the following coordinate increment discrete gradient composition (CIDG-C) method
\begin{equation}\label{eq:4s4}
\Psi_{h} = \Phi_{\frac{h}{2}} \circ \Phi^{*}_{\frac{h}{2}},
\end{equation}
i.e.,
\begin{align}\label{eq:4s5}
&\frac{\textbf{z}-\textbf{z}^{n}}{\frac{1}{2}h} = K(\frac{\textbf{z}+\textbf{z}^{n}}{2})\bar{\nabla} H(\textbf{z}^{n},\textbf{z}),
\\[0.3cm]\label{eq:4s6}
&\frac{\textbf{z}^{n+1}-\textbf{z}}{\frac{1}{2}h} = K(\frac{\textbf{z}^{n+1}+\textbf{z}}{2})\bar{\nabla} H(\textbf{z}^{n+1},\textbf{z}),
\end{align}
$n = 0,1,2,...,N.$

\begin{theorem}\label{theorem:4s1}
CIDG-I, CIDG-II and CIDG-C methods can all conserve the Hamiltonian energy of the system \eqref{eq:3s5}. Moreover, the CIDG-C method is symmetrical and has order 2.
\end{theorem}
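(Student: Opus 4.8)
The plan is to reduce all four assertions to one structural identity for the coordinate increment gradient together with the standard composition theory of one-step methods. First I would record a discrete chain rule: the difference quotients in \eqref{eq:4s2} come from the telescoping sum $H(\bar{\textbf{z}})-H(\textbf{z})=\sum_{i=1}^{6}\big(H(\bar z_1,\dots,\bar z_i,z_{i+1},\dots,z_6)-H(\bar z_1,\dots,\bar z_{i-1},z_i,\dots,z_6)\big)$, and the $i$-th summand is exactly the $i$-th component of $\bar{\nabla}H(\bar{\textbf{z}},\textbf{z})$ multiplied by $(\bar z_i-z_i)$ (reading a quotient with $\bar z_i=z_i$ as $\partial_i H$, which keeps $\bar{\nabla}H$ continuous and the implicit stages uniquely solvable for small $h$). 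Hence $H(\bar{\textbf{z}})-H(\textbf{z})=\bar{\nabla}H(\bar{\textbf{z}},\textbf{z})^{T}(\bar{\textbf{z}}-\textbf{z})$, and interchanging the two arguments gives the companion identity $H(\bar{\textbf{z}})-H(\textbf{z})=\bar{\nabla}H(\textbf{z},\bar{\textbf{z}})^{T}(\bar{\textbf{z}}-\textbf{z})$.

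Energy conservation then follows exactly as in \eqref{eq:3s6}. For CIDG-I, substitute $\textbf{z}^{n+1}-\textbf{z}^{n}=h\,K(\cdot)\bar{\nabla}H(\textbf{z}^{n+1},\textbf{z}^{n})$ into the first identity to obtain $H(\textbf{z}^{n+1})-H(\textbf{z}^{n})=h\,\bar{\nabla}H^{T}K\bar{\nabla}H=0$ by skew-symmetry of $K$; for CIDG-II \eqref{eq:4s3} the same computation works with the companion identity and the gradient $\bar{\nabla}H(\textbf{z}^{n},\textbf{z}^{n+1})$. For CIDG-C, equation \eqref{eq:4s5} is a CIDG-II half-step from $\textbf{z}^{n}$ to the internal stage $\textbf{z}$ and \eqref{eq:4s6} is a CIDG-I half-step from $\textbf{z}$ to $\textbf{z}^{n+1}$, so $H(\textbf{z}^{n})=H(\textbf{z})=H(\textbf{z}^{n+1})$.

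For symmetry, I would first check that CIDG-II is the adjoint of CIDG-I: the substitution $(\textbf{z}^{n},\textbf{z}^{n+1},h)\mapsto(\textbf{z}^{n+1},\textbf{z}^{n},-h)$ fixes $K(\tfrac{\textbf{z}^{n+1}+\textbf{z}^{n}}{2})$ and the left-hand difference quotient but replaces $\bar{\nabla}H(\textbf{z}^{n+1},\textbf{z}^{n})$ by $\bar{\nabla}H(\textbf{z}^{n},\textbf{z}^{n+1})$, i.e. \eqref{eq:4s1} becomes \eqref{eq:4s3}; hence $\Psi_h^{*}=(\Phi_{h/2}\circ\Phi_{h/2}^{*})^{*}=(\Phi_{h/2}^{*})^{*}\circ\Phi_{h/2}^{*}=\Phi_{h/2}\circ\Phi_{h/2}^{*}=\Psi_h$, the standard symmetrization of a method with its adjoint~\cite{Ref2006Hairer}. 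For the order, CIDG-I is consistent with \eqref{eq:3s5} since $\bar{\nabla}H(\bar{\textbf{z}},\textbf{z})\to\nabla H(\textbf{z})$ and $K(\tfrac{\bar{\textbf{z}}+\textbf{z}}{2})\to K(\textbf{z})$ as $\bar{\textbf{z}}\to\textbf{z}$; therefore $\Psi_h$ is consistent, and a consistent symmetric one-step method has even order, so $\Psi_h$ has order at least two. A Taylor expansion of one CIDG-I step shows that its $h^{2}$ local-error coefficient is generically nonzero (the coordinate-increment gradient is not symmetric in its two arguments, so the midpoint evaluation of $K$ alone does not raise the order), so CIDG-I is exactly first order and $\Psi_h$ exactly second order.

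The first three steps are essentially bookkeeping. The delicate point is the order count: establishing both that $\Psi_h$ attains order two and that it does not exceed it requires an explicit second-order expansion of \eqref{eq:4s1} and \eqref{eq:4s3}, together with the (routine but necessary) verification that the implicit stages in \eqref{eq:4s1} and \eqref{eq:4s5}--\eqref{eq:4s6} are uniquely solvable for $h$ small. I expect to settle this by combining the composition and symmetry results of \cite{Ref2006Hairer} with a direct Taylor expansion of a single CIDG-I step.
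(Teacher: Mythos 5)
Your proposal is correct and follows essentially the same route as the paper: the telescoping (discrete chain rule) identity for the coordinate increment gradient combined with the skew-symmetry of $K$ gives energy conservation of CIDG-I and CIDG-II and hence of the composition, and symmetry together with the composition-with-adjoint theory of Hairer et al.\ yields order 2. The only differences are cosmetic: the paper verifies symmetry by substituting $n\leftrightarrow n+1$, $h\to -h$ directly into \eqref{eq:4s5}--\eqref{eq:4s6} instead of invoking $(\Phi_{h/2}\circ\Phi^{*}_{h/2})^{*}=\Phi_{h/2}\circ\Phi^{*}_{h/2}$ after checking adjointness, and it handles the degenerate case $z^{n+1}_{i}=z^{n}_{i}$ in Remark \ref{remark:1}, while your additional remarks on solvability of the implicit stages and on the order not exceeding two go beyond what the paper actually proves.
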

\begin{proof}
From the CIDG-I method \eqref{eq:4s1}, we obtain
\begin{align*}
0  = & h \bar{\nabla} H(\textbf{z}^{n+1},\textbf{z}^{n})^{T} K(\frac{\textbf{z}^{n+1}+\textbf{z}^{n}}{2})\bar{\nabla} H(\textbf{z}^{n+1},\textbf{z}^{n})
= \bar{\nabla} H(\textbf{z}^{n+1},\textbf{z}^{n})^{T}(\textbf{z}^{n+1}-\textbf{z}^{n})
\\[0.3cm]
 = & \frac{H(z^{n+1}_{1},z^{n}_{2},z^{n}_{3},z^{n}_{4},z^{n}_{5},z^{n}_{6})-H(z^{n}_{1},z^{n}_{2},z^{n}_{3},z^{n}_{4},z^{n}_{5},z^{n}_{6})}{z^{n+1}_{1}-z^{n}_{1}} (z^{n+1}_{1}-z^{n}_{1}) +...\\[0.3cm]
 &+
\frac{H(z^{n+1}_{1},z^{n+1}_{2},z^{n+1}_{3},z^{n+1}_{4},z^{n+1}_{5},z^{n+1}_{6})-H(z^{n+1}_{1},z^{n+1}_{2},z^{n+1}_{3},z^{n+1}_{4},z^{n+1}_{5},z^{n}_{6})}{z^{n+1}_{6}-z^{n}_{6}} (z^{n+1}_{6}-z^{n}_{6})\\[0.3cm]
 = & [H(z^{n+1}_{1},z^{n}_{2},z^{n}_{3},z^{n}_{4},z^{n}_{5},z^{n}_{6})-H(z^{n}_{1},z^{n}_{2},z^{n}_{3},z^{n}_{4},z^{n}_{5},z^{n}_{6})]+
[H(z^{n+1}_{1},z^{n+1}_{2},z^{n}_{3},z^{n}_{4},z^{n}_{5},z^{n}_{6})
\\[0.3cm]
& -H(z^{n+1}_{1},z^{n}_{2},z^{n}_{3},z^{n}_{4},z^{n}_{5},z^{n}_{6})]+...+[H(z^{n+1}_{1},z^{n+1}_{2},z^{n+1}_{3},z^{n+1}_{4},z^{n+1}_{5},z^{n+1}_{6})
\\[0.3cm]
&  -H(z^{n+1}_{1},z^{n+1}_{2},z^{n+1}_{3},z^{n+1}_{4},z^{n+1}_{5},z^{n}_{6})]
\\[0.3cm]
 = & H(z^{n+1}_{1},z^{n+1}_{2},z^{n+1}_{3},z^{n+1}_{4},z^{n+1}_{5},z^{n+1}_{6}) - H(z^{n}_{1},z^{n}_{2},z^{n}_{3},z^{n}_{4},z^{n}_{5},z^{n}_{6}),
\end{align*}
i.e., the energy conservation of the method is obtained by
\begin{equation*}
H(\textbf{z}^{n+1}) = H(\textbf{z}^{n}), \quad n=0,1,2,...,N.
\end{equation*}

On the other hand, the energy conservation of the CIDG-II method \eqref{eq:4s3} can also be obtained by
\begin{align*}
0  = & h \bar{\nabla} H(\textbf{z}^{n},\textbf{z}^{n+1})^{T} K(\frac{\textbf{z}^{n}+\textbf{z}^{n+1}}{2})\bar{\nabla} H(\textbf{z}^{n},\textbf{z}^{n+1})
= \bar{\nabla} H(\textbf{z}^{n},\textbf{z}^{n+1})^{T}(\textbf{z}^{n+1}-\textbf{z}^{n})
\\[0.3cm]
 = & \frac{H(z^{n+1}_{1},z^{n+1}_{2},z^{n+1}_{3},z^{n+1}_{4},z^{n+1}_{5},z^{n+1}_{6})-H(z^{n}_{1},z^{n+1}_{2},z^{n+1}_{3},z^{n+1}_{4},z^{n+1}_{5},z^{n+1}_{6})}{z^{n+1}_{1}-z^{n}_{1}} (z^{n+1}_{1}-z^{n}_{1}) +...\\[0.3cm]
 &+
\frac{H(z^{n}_{1},z^{n}_{2},z^{n}_{3},z^{n}_{4},z^{n}_{5},z^{n+1}_{6})-H(z^{n}_{1},z^{n}_{2},z^{n}_{3},z^{n}_{4},z^{n}_{5},z^{n}_{6})}{z^{n+1}_{6}-z^{n}_{6}}
(z^{n+1}_{6}-z^{n}_{6})\\[0.3cm]
 = & H(z^{n+1}_{1},z^{n+1}_{2},z^{n+1}_{3},z^{n+1}_{4},z^{n+1}_{5},z^{n+1}_{6}) - H(z^{n}_{1},z^{n}_{2},z^{n}_{3},z^{n}_{4},z^{n}_{5},z^{n}_{6})
= H(\textbf{z}^{n+1}) - H(\textbf{z}^{n}).
\end{align*}
Because CIDG-I and CIDG-II are both energy-preserving methods, the CIDG-C method  \eqref{eq:4s4} is also obviously energy-preserved.

Finally, take $n+1 = n$, and $h=-h$ in the CIDG-C method \eqref{eq:4s5}-\eqref{eq:4s6}, then we have
\begin{align*}
&\frac{\textbf{z}-\textbf{z}^{n+1}}{-\frac{1}{2}h} = K(\frac{\textbf{z}+\textbf{z}^{n+1}}{2})\bar{\nabla} H(\textbf{z}^{n+1},\textbf{z}),
\\[0.3cm]
&\frac{\textbf{z}^{n}-\textbf{z}}{-\frac{1}{2}h} = K(\frac{\textbf{z}^{n}+\textbf{z}}{2})\bar{\nabla} H(\textbf{z}^{n},\textbf{z}),
\end{align*}
which is just equal to \eqref{eq:4s5}-\eqref{eq:4s6}. Therefore, the CIDG-C method is symmetrical and has order 2~\cite{Ref2006Hairer}.\end{proof}

\begin{remark}\label{remark:1}
In practical calculations, in order to avoid that $z^{n+1}_{i}-z^{n}_{i}$ is used as the denominator and equals to $0$, we take (for instance) $\frac{H(z^{n+1}_{1},z^{n+1}_{2},z^{n+1}_{3},z^{n}_{4},z^{n}_{5},z^{n}_{6})-H(z^{n+1}_{1},z^{n+1}_{2},z^{n}_{3},z^{n}_{4},z^{n}_{5},z^{n}_{6})}{z^{n+1}_{3}-z^{n}_{3}}$ as $\lim\limits_{\epsilon \to 0} \frac{H(z^{n+1}_{1},z^{n+1}_{2},z^{n+1}_{3}+\epsilon,z^{n}_{4},z^{n}_{5},z^{n}_{6})-H(z^{n+1}_{1},z^{n+1}_{2},z^{n}_{3},z^{n}_{4},z^{n}_{5},z^{n}_{6})}{z^{n+1}_{3}+\epsilon-z^{n}_{3}}$, 
\\ which is equal to
$\frac{H(z^{n+1}_{1},z^{n+1}_{2},z^{n+1}_{3},z^{n}_{4},z^{n}_{5},z^{n}_{6})-H(z^{n+1}_{1},z^{n+1}_{2},z^{n}_{3},z^{n}_{4},z^{n}_{5},z^{n}_{6})}{z^{n+1}_{3}-z^{n}_{3}}$ when $(z^{n+1}_{3}-z^{n}_{3}) \neq 0$ 
\\ and equal to $\frac{\partial H}{\partial z_{3}}(z^{n+1}_{1},z^{n+1}_{2},z^{n}_{3},z^{n}_{4},z^{n}_{5},z^{n}_{6})$ when $(z^{n+1}_{3}-z^{n}_{3}) = 0$. Note that since $(z^{n+1}_{3}-z^{n}_{3}) = 0$, the preservation of the Hamiltonian is guaranteed.
\end{remark}

\begin{remark}\label{remark:2}
Applying Eqs. \eqref{eq:4s5}-\eqref{eq:4s6} into the Eqs. \eqref{eq:3s2}-\eqref{eq:3s4}, we can obtain the following equivalent form of the CIDG-C method
\begin{align*}
&\frac{\textbf{x}-\textbf{x}^{n}}{\tau} = \frac{\textbf{v}+\textbf{v}^{n}}{2},
\\
&\frac{\textbf{v}-\textbf{v}^{n}}{\tau} = S(\frac{\textbf{x}+\textbf{x}^{n}}{2})\frac{\textbf{v}+\textbf{v}^{n}}{2}-
\left(
       \begin{array}{c}
              \frac{U(x,y^{n},z^{n})-U(x^{n},y^{n},z^{n})}{x-x^{n}}   \\
              \frac{U(x,y,z^{n})-U(x,y^{n},z^{n})}{y-y^{n}}   \\
              \frac{U(x,y,z)-U(x,y,z^{n})}{z-z^{n}}
             \end{array}
\right),
\\[0.3cm]
&\frac{\textbf{x}^{n+1}-\textbf{x}}{\tau} = \frac{\textbf{v}^{n+1}+\textbf{v}}{2},
\\
&\frac{\textbf{v}^{n+1}-\textbf{v}}{\tau} = S(\frac{\textbf{x}^{n+1}+\textbf{x}}{2})\frac{\textbf{v}^{n+1}+\textbf{v}}{2}-
\left(
       \begin{array}{c}
              \frac{U(x^{n+1},y^{n+1},z^{n+1})-U(x,y^{n+1},z^{n+1})}{x^{n+1}-x}   \\
              \frac{U(x,y^{n+1},z^{n+1})-U(x,y,z^{n+1})}{y^{n+1}-y}   \\
              \frac{U(x,y,z^{n+1})-U(x,y,z)}{z^{n+1}-z}
             \end{array}
\right),
\end{align*}
where $\tau = \frac{h}{2}$, $n = 0,1,2,...,N.$
\end{remark}

\section{Numerical experiments}
\label{sec:5}

In this section, we numerically test the CIDG-C method \eqref{eq:4s5}-\eqref{eq:4s6}and compare the results with the
effective Boris method~\cite{Ref1970Boris}, the Boole discrete line integral (BDLI) method~\cite{Ref2016Li} and the line integral method with parameters (2,2) (which we denote as LIM(2,2))~\cite{Ref2019Brugnano}. All numerical tests have been done on a 2.4 GHz Intel core i5 computer with 16 GB of memory, running Octave 8.2.0.

\subsection{\textbf{2D dynamics in a static electromagnetic field}}
\label{sec:5s1}

Firstly,
we consider the 2D dynamics of the charged particle in a static, non-uniform electromagnetic field which is a significant application in the study of the single particle motion and the guiding center dynamics.
In this case the Lorentz force system is a non-polynomial Hamiltonian system.
Theoretical analysis shows that the analytic orbit of the charged particle is a spiraling circle which have a constant radius.
The large circle corresponds to the $\nabla \cdot \textbf{B}$ drift and the
$\textbf{E}\times \textbf{B}$ drift of the guiding center, and the small circle is a
fast-scale gyromotion~\cite{Ref2013Qin,Ref2015He}.

The static, non-uniform electromagnetic field is taken as
\begin{equation}\label{eq:5s1}
\textbf{B}=\nabla\times \textbf{A}=R\textbf{e}_{z},\quad \textbf{E}=-\nabla U=\frac{10^{-2}}{R^{3}}(x\textbf{e}_{x}+y\textbf{e}_{y}),
\end{equation}
where the potentials are chosen to be $\textbf{A}=\frac{R^{2}}{3}R\textbf{e}_{\xi}$, $U=\frac{10^{-2}}{R}$ in cylindrical coordinates $(R,\xi,z)$ with $R=\sqrt{x^{2}+y^{2}}$. In this example, we take the physical quantities $q=1$, $m=1$, which are normalized by the magnetic field $\textbf{B}$ and the electric field $\textbf{E}$.
The Hamiltonian energy of the system \eqref{eq:3s5}
\begin{equation}\label{eq:5s2}
H =\frac{1}{2}\textbf{v}^{T} \textbf{v} + \frac{0.01}{\sqrt{x^{2}+y^{2}}}
\end{equation}
and the angular momentum
\begin{equation}\label{eq:5s2}
p = R^{2}\frac{d}{dt} \xi + \frac{1}{3}R^{3}
\end{equation}
are constant~\cite{Ref2015He}. As the given electromagnetic field \eqref{eq:5s1} changes slowly with respect to the spatial period of the motion, the magnetic moment $\mu=\frac{\textbf{v}_{\bot}^{2}}{2R}$ is an adiabatic invariant, where $\textbf{v}_{\bot}$ is the component of $\textbf{v}$ perpendicular to $\textbf{B}$.

We consider the initial position $\textbf{x}_{0}=(0, 1, 0)^{T}$ with the initial velocity $\textbf{v}_{0}=(0.1, 0.01, 0)^{T}$,  and the step size is $h =\pi/10$ which is one-twentieth of the characteristic gyro-period $2\pi$.

\begin{figure}[htbp]
  (a)\includegraphics[width=0.35\textwidth]{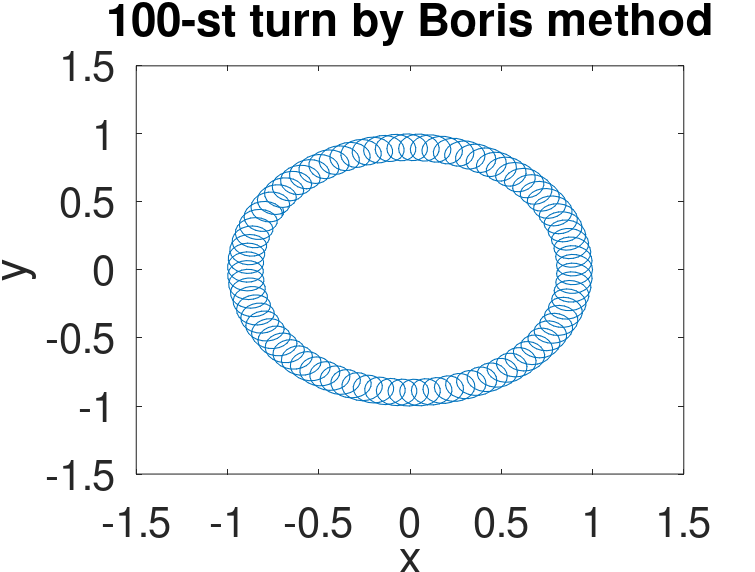}\quad
  (b)\includegraphics[width=0.55\textwidth]{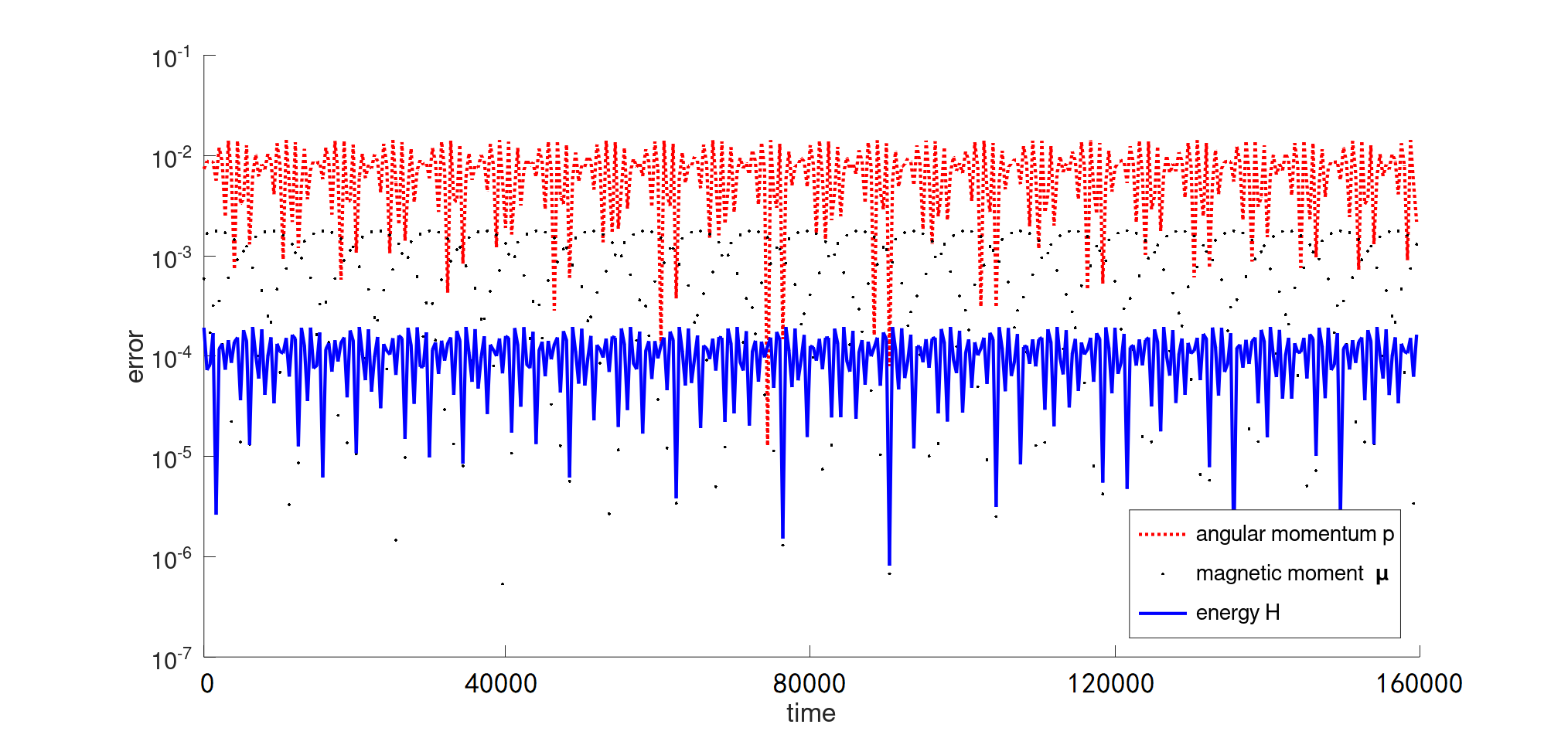}\\
\caption{The Boris method is applied to the simple 2D dynamics with step $h=\pi/10$. (a) The orbit in the 100-st turn; (b) Errors of the angular momentum $p_{\xi}$, the magnetic moment $\mu$ and the energy $H$ for $t\in [0,5\times 10^{5}h]$.}
\label{fig:1}
  (a)\includegraphics[width=0.35\textwidth]{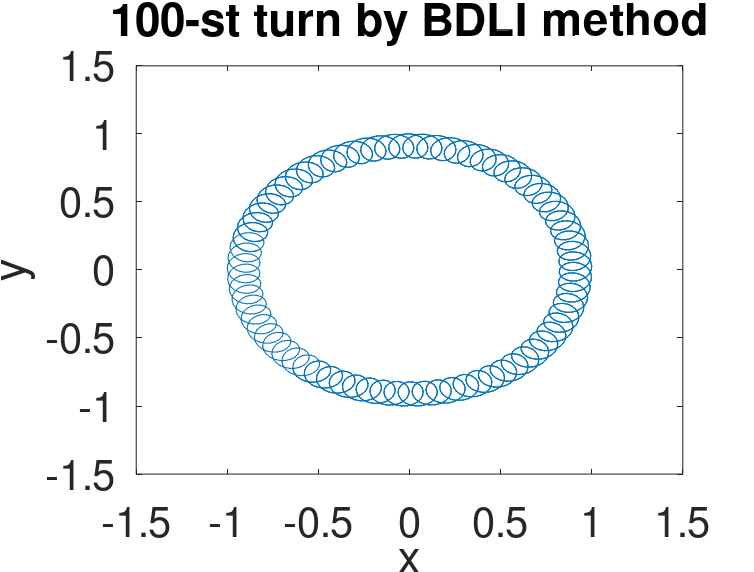}\quad
  (b)\includegraphics[width=0.55\textwidth]{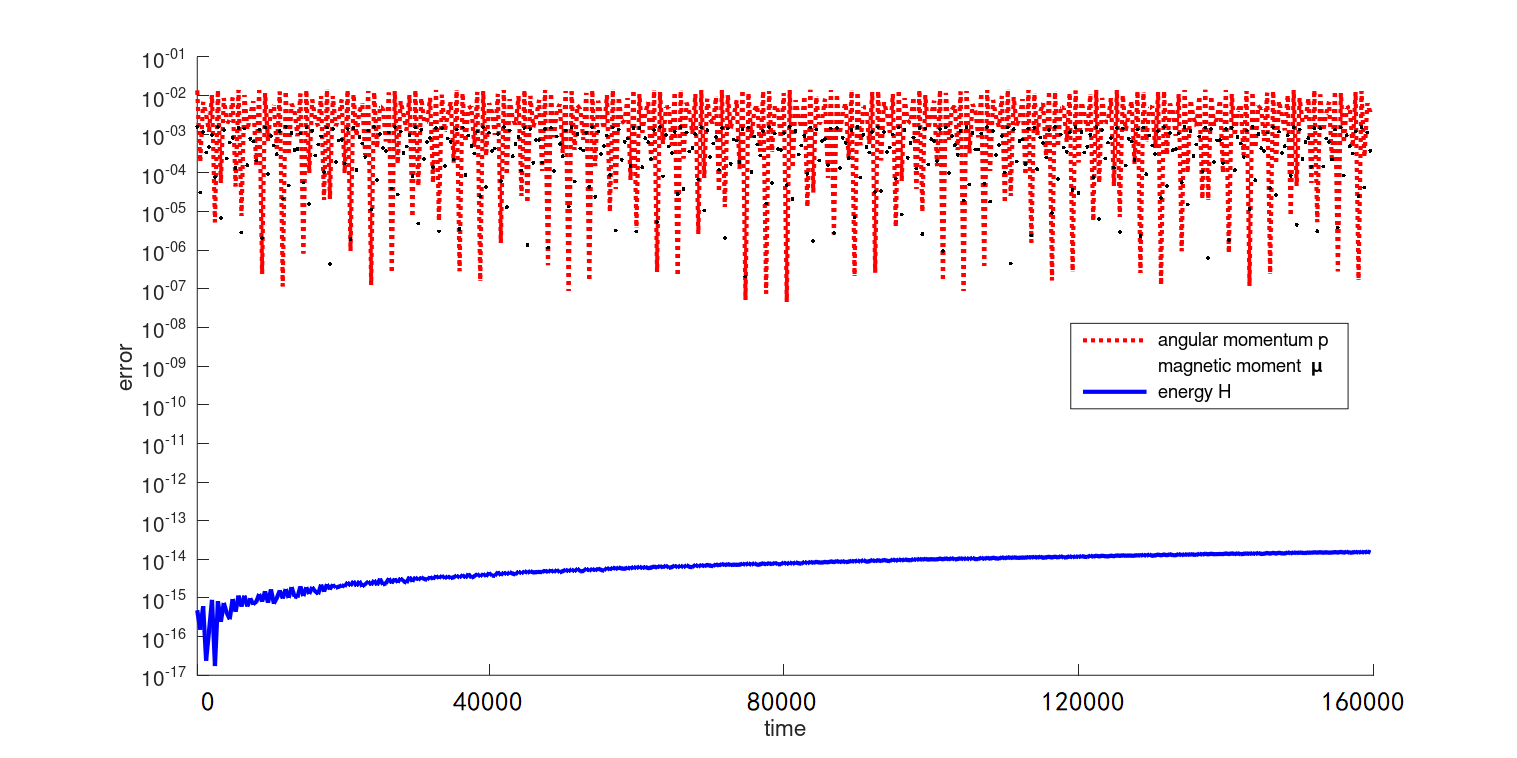}\\
\caption{The BDLI method is applied to the simple 2D dynamics with step $h=\pi/10$. (a) The orbit in the 100-st turn; (b) Errors of the angular momentum $p_{\xi}$, the magnetic moment $\mu$ and the energy $H$ for $t\in [0,5\times 10^{5}h]$.}
\label{fig:2}
\end{figure}

We apply the Boris methods, the BDLI method, the LIM(2,2) method and the CIDG-C method with the same step and compare the results through numerical simulation.
Fig. \ref{fig:1}(a) shows the numerical orbits of the Boris method in the 100-st turn.
Fig. \ref{fig:1}(b) shows the error of the angular momentum $p_{\xi}$, the magnetic moment $\mu$ and the energy $H$ for $t\in [0,5\times 10^{5}h]$.
The numerical results demonstrate that the errors of the three invariants are bounded for a long integration time.

Fig. \ref{fig:2}(a) and Fig. \ref{fig:3}(a) shows the numerical orbits of the BDLI method and the LIM(2,2) method in the 100-st turn.
Fig. \ref{fig:2}(b) and Fig. \ref{fig:3}(b) shows the error of the invariants of this two methods for $t\in [0,5\times 10^{5}h]$.
The BDLI method use Boole's rule to calculate the integrand numerically and the LIM(2,2) method use Guass-Legendre formulas with two points. They are symmetrical and have order 2. The energy can be conserved approximatively and the error of the energy will become smaller until round-off error if we use higher order numerical integration formula.

\begin{figure}[htbp]
  (a)\includegraphics[width=0.35\textwidth]{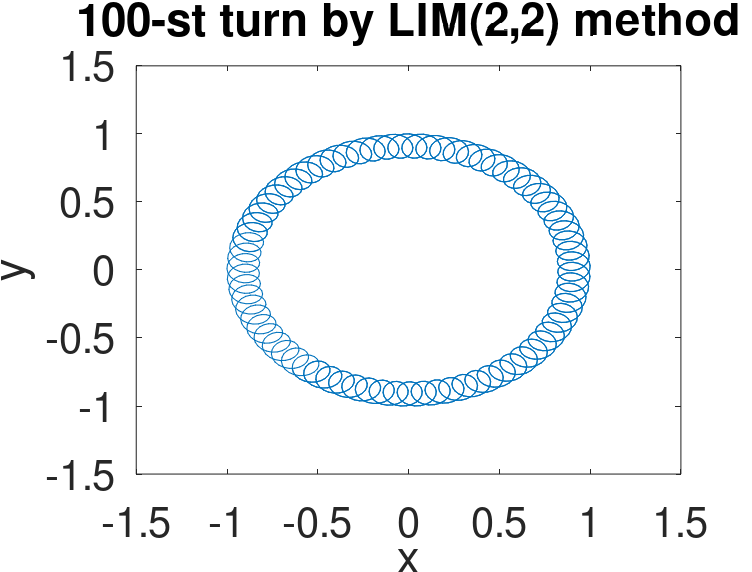}\quad
  (b)\includegraphics[width=0.55\textwidth]{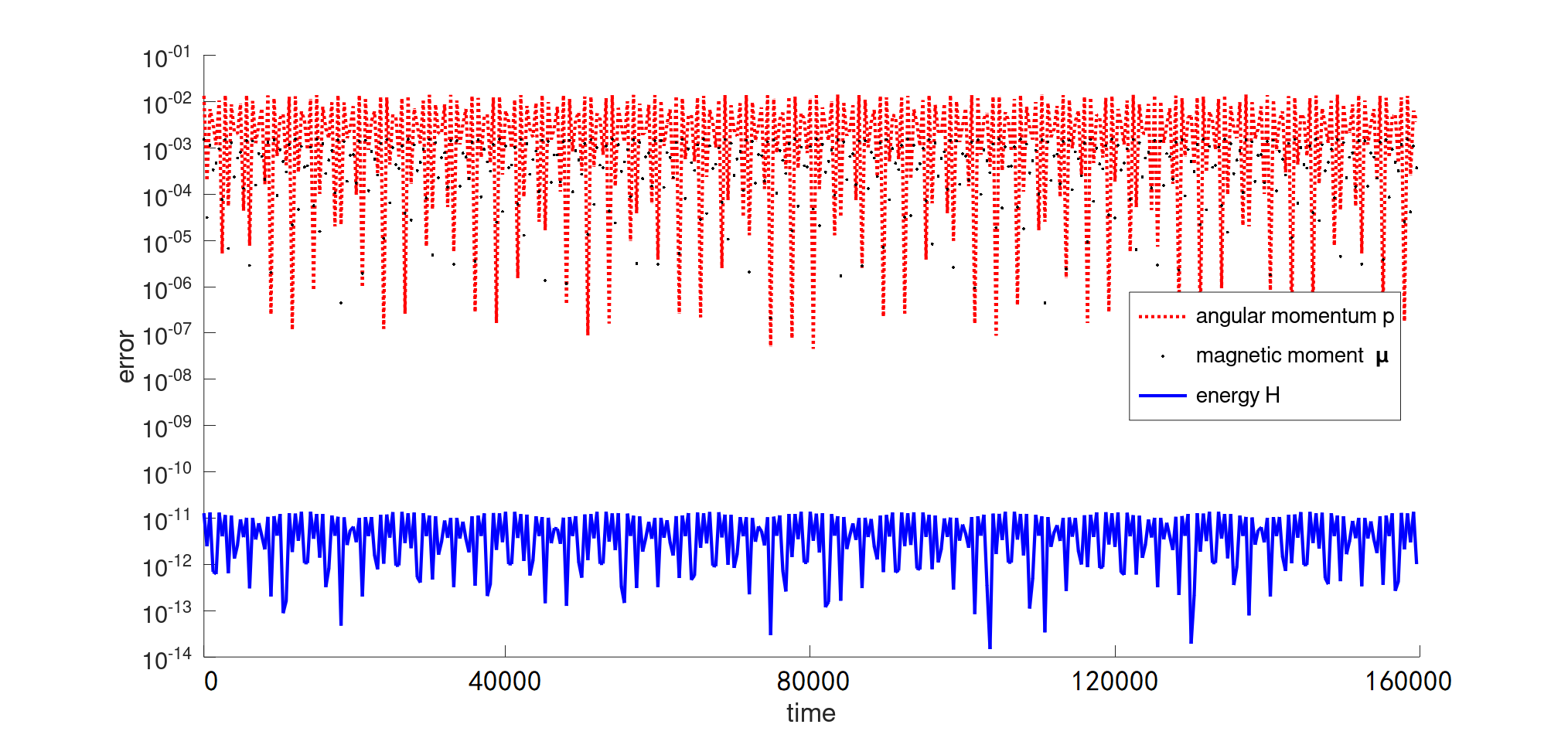}\\
\caption{The LIM(2,2) method is applied to the simple 2D dynamics with step $h=\pi/10$. (a) The orbit in the 100-st turn; (b) Errors of the angular momentum $p_{\xi}$, the magnetic moment $\mu$ and the energy $H$ for $t\in [0,5\times 10^{5}h]$.}
\label{fig:3}
(a)\includegraphics[width=0.35\textwidth]{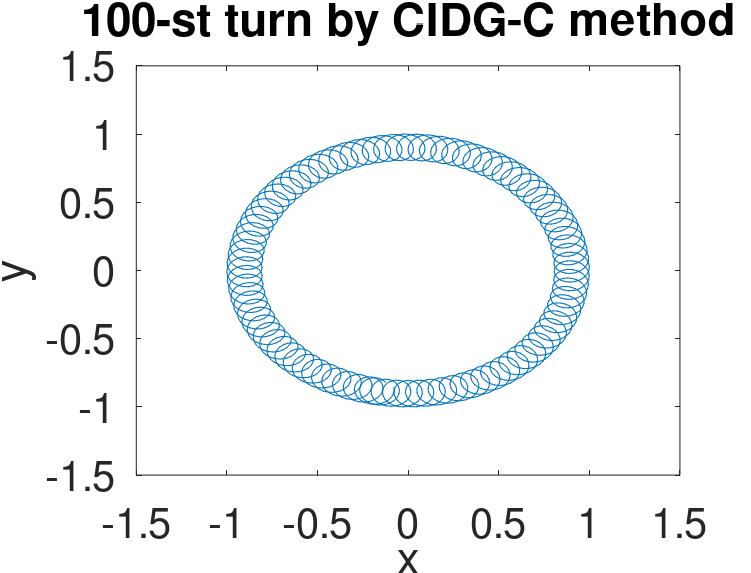}\quad
  (b)\includegraphics[width=0.55\textwidth]{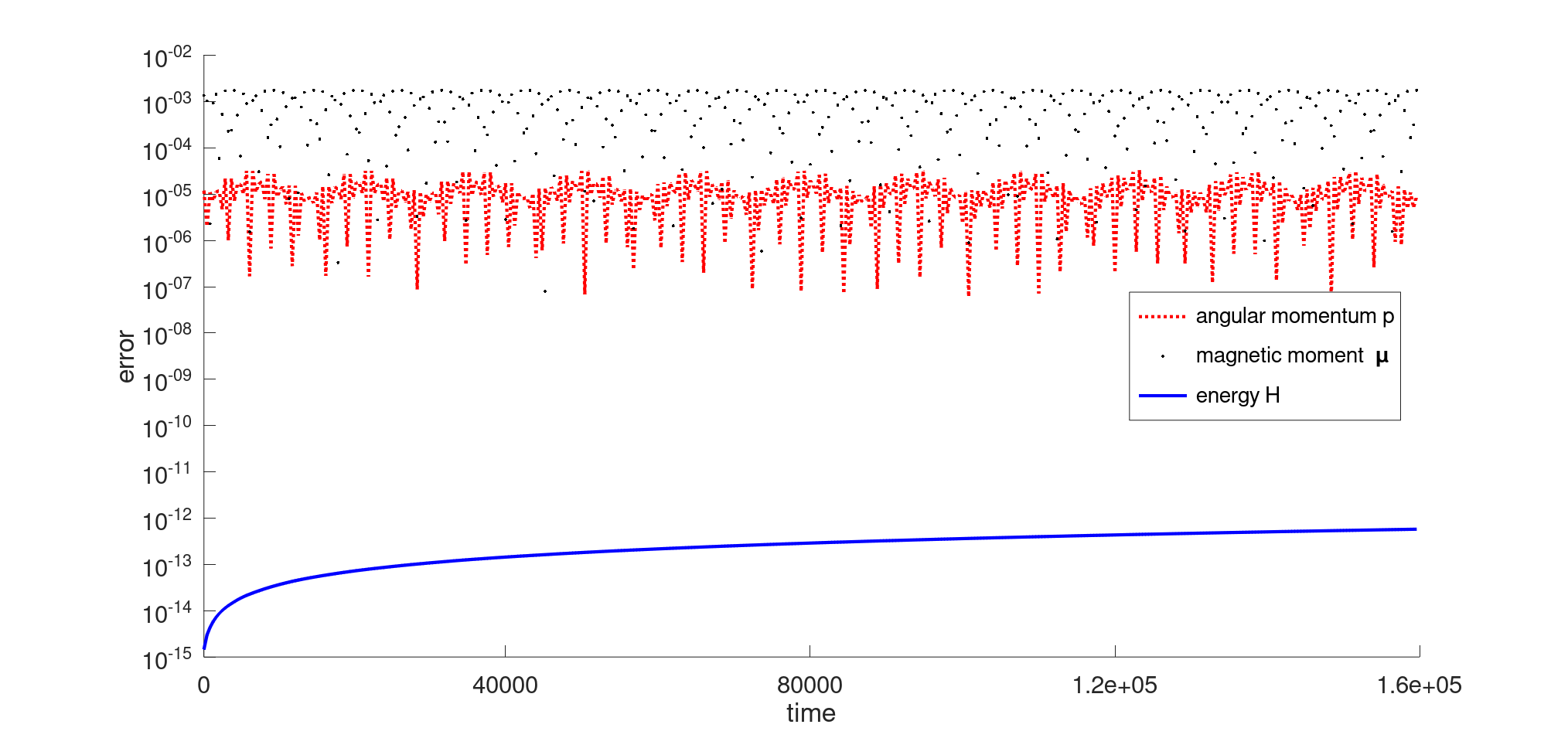}\\
\caption{The CIDG-C method is applied to the simple 2D dynamics with step $h=\pi/10$. (a) The orbit in the 100-st turn; (b) Errors of the angular momentum $p_{\xi}$, the magnetic moment $\mu$ and the energy $H$ for $t\in [0,5\times 10^{5}h]$.}
\label{fig:4}
\end{figure}

Fig. \ref{fig:4}(a) shows the numerical orbits of the CIDG-C method in the 100-st turn.
Fig. \ref{fig:4}(b) shows the error of the invariants for $t\in [0,5\times 10^{5}h]$.
From the figure we can know that the error of the angular momentum $p_{\xi}$ and the magnetic moment $\mu$ are also bounded for a long integration time.
What is more, the CIDG-C method can exactly conserve the energy of the system.

And we compare the CPU times of the four methods from $t = 0$ to $t = 2\times 10^{4}h$ with the same step $h=\pi/10$. The Boris method takes the least amount of time. The Boris method is essentially explicit but it does not preserve the energy as well as the other methods. The LIM(2,2) method is 1.56 times faster than the CIDG-C method and 1.89 times faster than the BDLI method. The BDLI method use Boole’s rule
to calculate the integrand numerically with five points and the LIM(2,2) method use Guass-Legendre
formulas with two points. The BDLI method use more points than the LIM(2,2) method, so the BDLI method takes more time.Table \ref{tab:example}
\begin{table}[H]
	\caption{CPU times of the four methods from $t = 0$ to $t = 2\times 10^{4}h$, with the same 	step $h=\pi/10$.}
	\centering
	\label{tab:example}
	\begin{tabular}{cccc}
	\toprule
	Boris method&BDLI method&LIM(2,2) method&CIDG-C method \\
	\midrule
	3.047 s&91.656 s&48.438 s&75.531 s \\
	\bottomrule
\end{tabular}
\end{table}

\subsection{\textbf{Energy behavior test}}
\label{sec:5s2}

Secondly, we consider the system \eqref{eq:3s2}-\eqref{eq:3s4} with
\begin{align}\label{eq:6s1}
&U(\textbf{x}) = x^{3} - y^{3} + \frac{1}{5}x^{4} + y^{4} + z^{4},
\\\label{eq:6s2}
&\textbf{B}(\textbf{x}) = (0,0,\sqrt{x^{2}+y^{2}})^{T},
\\\label{eq:6s3}
&\textbf{x}(0) = (0,1,0.1)^{T}, \textbf{v}(0) = (0.09,0.55,0.3)^{T},
\end{align}
for which in~\cite{Ref2018Hairer,Ref2019Brugnano} it has been proven that the Boris method exhibits a $O(th^{2})$ drift in the energy.
Fig. \ref{fig:5}(a) shows the energy drift of the Boris method with the step size $h = 10^{-2}$ over the interval $[0, 3\times 10^{4}]$.
The CIDG-C method can conserve the energy with the same step size and there is not energy drift, which is shown in Fig. \ref{fig:5}(b). The LIM(2,2) method and the BDLI method can also conserve the energy with the same step size, which is shown in Fig. \ref{fig:5}(c) and Fig. \ref{fig:5}(d).

\begin{figure}[htbp]
  (a)\includegraphics[width=0.45\textwidth]{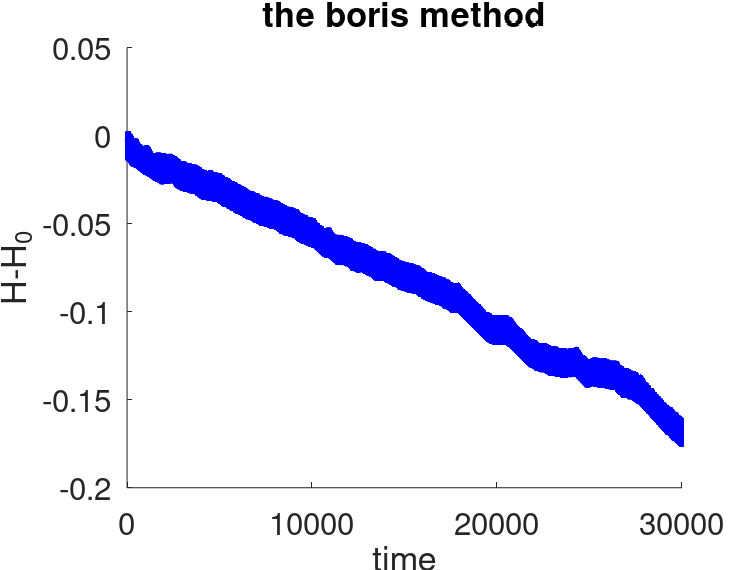}\quad
(b)\includegraphics[width=0.45\textwidth]{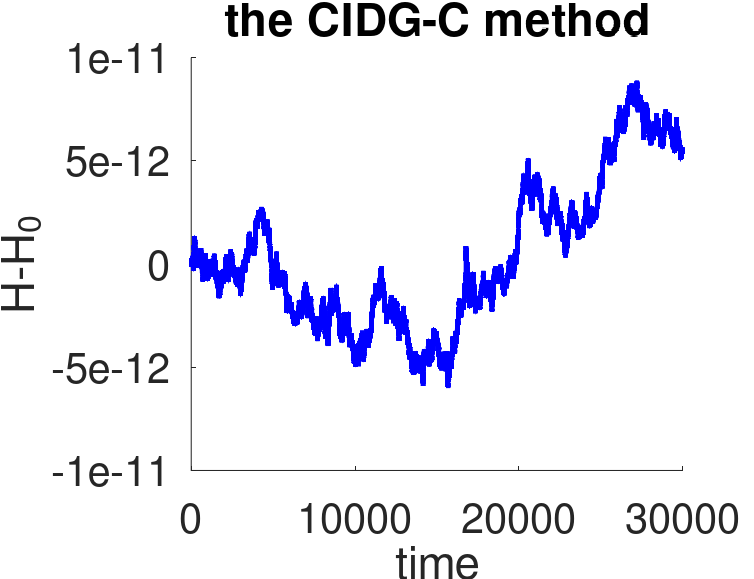}\\
(c)\includegraphics[width=0.45\textwidth]{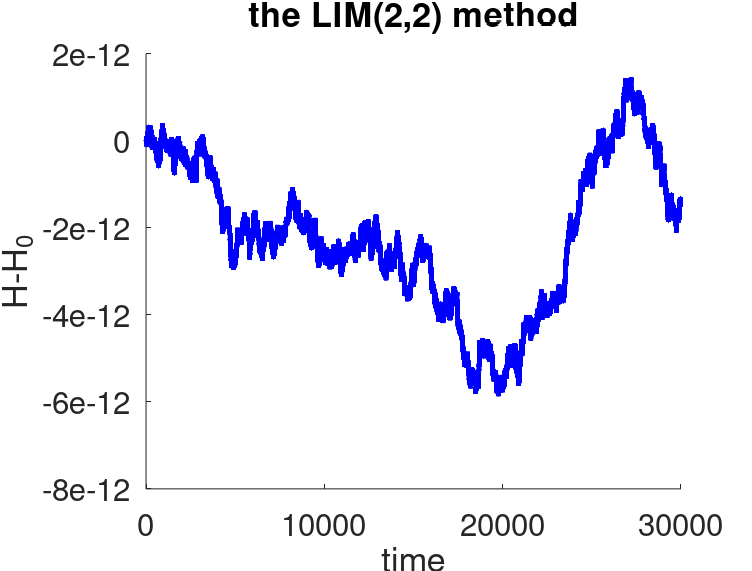}\quad
  (d)\includegraphics[width=0.45\textwidth]{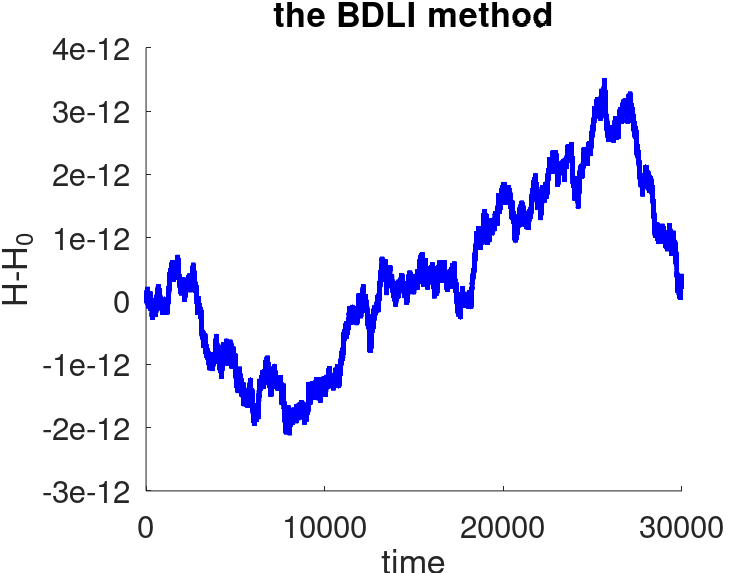}\\
\caption{Hamiltonian error solving problem\eqref{eq:6s1}-\eqref{eq:6s3} with step $h = 10^{-2}$. (a) The Boris method; \textcolor{red}{(b)The CIDG-C method} ;(c)The LIM(2,2) method ;(d)The BDLI method.}
\label{fig:5}
\end{figure}

\begin{remark}\label{remark:3}
\textcolor{red}{Errata} to A novel directly energy-preserving method for charged particle dynamics~\cite{Ref2024Li}: The \textcolor{red}{Fig. 5(b)} in the original paper contains an error, as the magnitude of the error on the vertical axis is incorrect. This is because we accidentally uploaded the wrong image when submitting the final LaTeX file to the editor. The correct image is included as \textcolor{red}{Fig. \ref{fig:5}(b)} in this article. Actually, we are more concerned that the CIDG-C method did not has an energy drift in this example, which is why the error in the \textcolor{red}{Fig. 5(b)} was not detected earlier. We apologize for our mistake.

\end{remark}

\subsection{\textbf{2D dynamics in an axisymmetric tokamak geometry}}
\label{sec:5s3}

Finally, we consider the motion of a charged particle in 2-dimensional axisymmetric tokamak geometry without inductive electric field,
which is also an important application of the guiding center dynamics \cite{Ref2013Qin,Ref2015He}.

The magnetic field in the toroidal coordinates $(r, \theta, \xi)$ is expressed as
\begin{equation}\label{eq:5s3}
\textbf{B}=\frac{B_{0}r}{qR}\textbf{e}_{\theta}+\frac{B_{0}R_{0}}{R}\textbf{e}_{\xi},
\end{equation}
where $B_{0}=1$, $R_{0}=1$, and $q =2$ are constant with their usual meanings. The corresponding vector potential $\textbf{A}$ is chosen to be
\begin{equation}\label{eq:5s4}
\textbf{A}=\frac{z}{2R}\textbf{e}_{R}+\frac{(1-R)^{2}+z^{2}}{4R}\textbf{e}_{\xi}+\frac{\ln R}{2}\textbf{e}_{z}.
\end{equation}

In this example, the physical quantities are normalized as usual. The energy of the system is $H=\frac{1}{2}\textbf{v}^{T} \textbf{v}$. With the conserved quantities, the solution orbit projected on
$(R,z)$ space forms a closed orbit.
In order to use the CIDG-C method, $\textbf{B}$ \eqref{eq:5s3} is transformed to the Cartesian coordinates $(x, y, z)$ which yields
\begin{equation}\label{eq:5s5}
\textbf{B}=-\frac{2y+xz}{2R^{2}}\textbf{e}_{x}+\frac{2x-yz}{2R^{2}}\textbf{e}_{y}+\frac{R-1}{2R}\textbf{e}_{z}.
\end{equation}

\begin{figure}[htbp]
  (a)\includegraphics[width=0.45\textwidth]{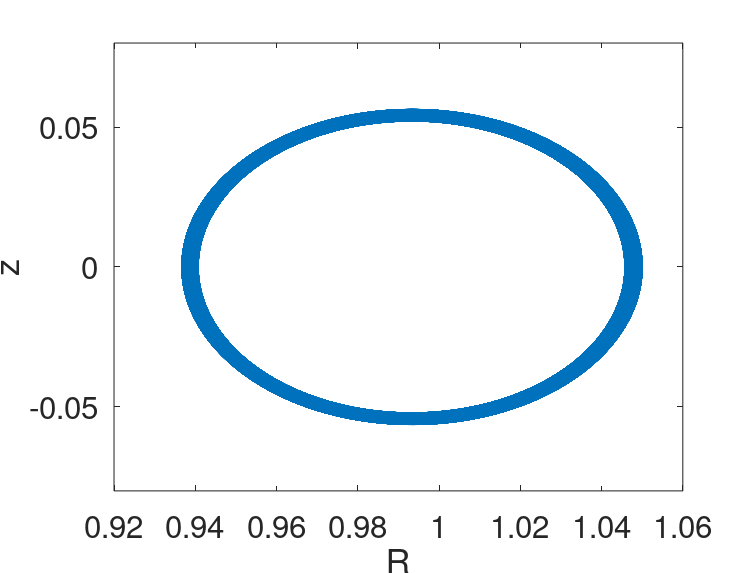}\quad
  (b)\includegraphics[width=0.45\textwidth]{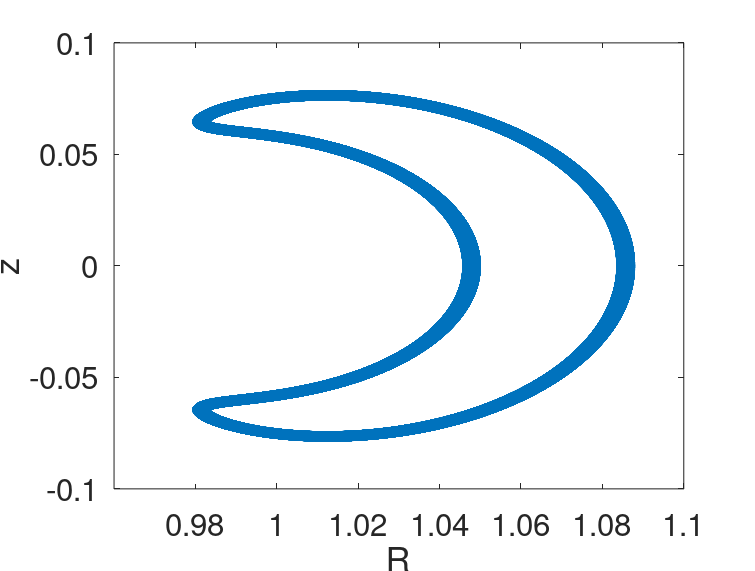}\\
  (c)\includegraphics[width=0.45\textwidth]{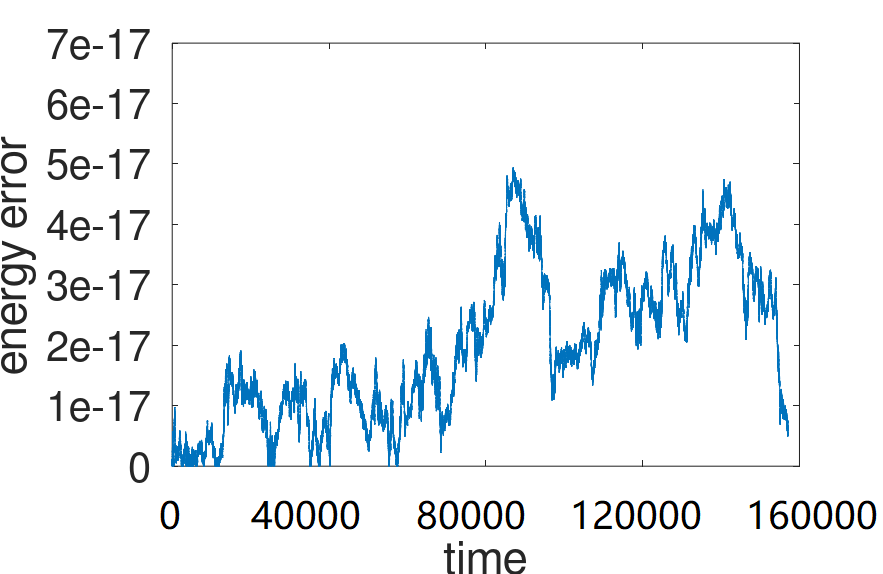}\\
\caption{Numerical solution of the CIDG-C method with step $h=\pi/10$ for $t\in [0,5\times 10^{5}h]$. (a) Transit orbit; (b) Banana orbit; (c) Energy preservation.}
\label{fig:6}
\end{figure}

We firstly consider the initial position $\textbf{x}_{0}=(1.05, 0, 0)^{T}$ and the initial velocity $\textbf{v}_{0}=(0, 2\times 4.816\times 10^{-4}, 2.059\times 10^{-3})^{T}$, the orbit projected on $(R, z)$ space is a transit orbit, and it will turn to a banana orbit when the initial velocity is changed to $\textbf{v}_{0}=(0, 4.816\times 10^{-4}, 2.059\times 10^{-3})^{T}$. We apply the CIDG-C method and use the step size $h =\pi/10$ which is the $1/20$ of the gyro-period $2\pi$.
The simulation over $5\times 10^{5}$ steps is shown in Fig. \ref{fig:6}. Fig. \ref{fig:6}(a) shows the transit orbit solution, Fig. \ref{fig:6}(b) shows the banana orbit solution and Fig. \ref{fig:6}(c) shows the energy preservation.
The CIDG-C method gives correct orbits for a very long integration time, and also maintain the energy precisely $H$ .

\section{Conclusions}
\label{sec:6}

The motion of single-particle, which satisfies the Lorentz force system, has a major role in plasmas.
The Hamiltonian energy is one of the most remarkable features to characterize the system.
In this paper, we apply the coordinate increment discrete gradient method to
the Hamiltonian system, then we derive the CIDG-I method. The new CIDG-I method and its adjoint method are both energy-preserving methods,
and can be combined into a new CIDG-C method.
The CIDG-C method is symmetrical and can exactly conserve
the Hamiltonian energy without numerical quadrature formula.
Numerical results have shown that
the CIDG-C method surpasses the Boris method in long time simulation.

\vspace{0.3cm}
\hspace{-0.5cm}{\bf Acknowledgements}\\
This work is supported by the National Natural Science Foundation of China (No. 12101072) and the Natural Science Foundation of Chongqing (CSTB2022NSCQ-MSX0890).

\vspace{0.5cm}
\hspace{-0.5cm}{\bf \large Reference}

\end{document}